\newtheorem{thm}{Theorem}
\DeclareMathOperator{\Div}{div}
\begin{document}
\title[Second order regularity]{Second order regularity of solutions of elliptic equations in divergence form with Sobolev coefficients}
\author{M.A. Perelmuter}
\address{SCAD Soft Ltd., 3a Osvity street, Kyiv, 03037, Ukraine}
\email{mikeperelmuter@gmail.com}
\keywords{elliptic operators, second order regularity, weak solutions}
\subjclass[2020]{35B45 35J15 42B37 46E35}

\maketitle

\begin{abstract}
We give $L^p$ estimates for the second derivatives of weak solutions to the Dirichlet problem for equation $\Div(\mathbf{A}\nabla u) = f$ in $\Omega\subset \mathbb{R}^d$ with Sobolev coefficients.
In particular, for $f\in L^2(\Omega) \bigcap L^s(\Omega)$
$$\|\Delta u\|_{2} \leq \begin{cases}
c_1\|f\|_2 + c_2 \|\nabla \mathbf{A}\|_q^2\|f\|_s,  & \text{if } 1 < s < d/2, \frac{1}{2}=\frac{2}{q}+ \frac{1}{s} - \frac{2}{d}\\
c_1\|f\|_2 + c_2 \|\nabla \mathbf{A}\|_4^2\|f\|_s,  & \text{if } s > d/2
\end{cases}.$$
\end{abstract}

\section{Introduction}
Let $\Omega \subset {\mathbb R}^d, d\geq 2$ be an open bounded domain with $C^2$ boundary. Let $\mathbf{A}:\Omega \rightarrow \mathbb{R}^{d\times d}$ be a measurable real-valued symmetric matrix
that satisfies the ellipticity conditions
\begin{equation} \label{ELower}
\lambda |\xi|^2 \leq\langle \mathbf{A}\xi,\xi \rangle , \qquad  \forall \xi \in \mathbb{R}^d.
\end{equation}
\begin{equation} \label{EUpper}
\langle \mathbf{A}\xi,\xi \rangle \leq \Lambda |\xi|^2, \qquad  \forall \xi \in \mathbb{R}^d.
\end{equation}
In this paper, we study the $L^p$   regularity of solutions to the following elliptic equation in divergence form with discontinuous coefficients
\begin{equation} \label{main1}
\begin{cases}
\sum\limits_{i,j=1}^d \frac{\partial}{\partial x_i} \left(a_{ij}(x) \frac{\partial u}{\partial x_j}\right) \equiv \Div(\mathbf{A}\nabla u) = f & \text{in } \Omega\\
\;\;u = 0  &  \text{on } \partial \Omega
\end{cases}.
\end{equation}
We denote the norm in $L^p(\Omega)$ by $\|\cdot\|_p$ and define the Sobolev space $W^{1,p}_0(\Omega)$ as the closure of $C_0^\infty(\Omega)$ with respect to the norm
$$\|v\|_{1,p} = \left(\int\limits_\Omega |\nabla v|^p \mathrm{d} x\right)^{1/p}. $$
Consider the sesquilinear form
$$\mathfrak{a}: W^{1,2}_0(\Omega) \times W^{1,2}_0(\Omega)\to {\mathbb C},  \mathfrak{a}(u,v) = \int\limits_\Omega \mathbf{A}\nabla u \cdot \nabla v \mathrm{d} x. $$
The ellipticity conditions \eqref{ELower}-\eqref{EUpper} imply that $\mathfrak{a}$ is coercive and bounded.
Using the Lax-Milgram theorem \cite[Chapter III.7]{Yosida} we can associate with the problem \eqref{main1} operator $\mathcal{L}$ in $L^2(\Omega)$ through this sesquilinear form. $\mathcal{L}$ is self-adjoint operator, $\mathcal{L}^{-1}$ is a bounded operator in $L^2(\Omega)$, $\mathcal{L}^{-1}$ maps $W^{-1,2}$ continuously into $W^{1,2}_0$, and
\begin{equation}\label{Meyers0}
\|\nabla\mathcal{L}^{-1}\nabla u \|_2 \leq \lambda^{-1} \|u\|_2.
\end{equation}
The problem of first-order regularity ($W^{1,p}$ estimate for solutions) was solved by N.G.Meyers~\cite{M}. Meyers considered equation $\Div\mathbf{A}\nabla u = \Div \mathbf{F} +f$ on a bounded domain with $C^1$ boundary. He showed that if matrix $\mathbf{A}$ is uniformly elliptic, then there exists $p_0 = p_0(d,\lambda, \Lambda) > 2$  such that for all $p_0' < p < p_0$
(here $p_0'=\frac{p_0}{p_0-1}$ is the H\"older conjugate exponent) there exists a weak solution $u$ that satisfies
\begin{equation}\label{Meyers1}
  	\| \nabla u\|_p \leq C(\|\mathbf{F}\|_p + \|f\|_p).
\end{equation}
The largest possible number $p$ in \eqref{Meyers1} is called Meyers exponent and is denoted by $p_0$.
Finding the Meyers exponent $p_0$ for $d > 2$ is an open problem. For $d=2$  it is known that $p_0 = \frac{2K}{K-1}, K=\sqrt{\frac{\Lambda}{\lambda}}$  \cite{LN} and this result is optimal (see example in \cite{M}). The $d$-independent bound $p_0 \leq \frac{14K-12}{7K-7}$ was proved by T.Iwaniec and C.Sbordone~\cite[Theorem 2]{IS2001}. Therefore, we can assume that $p_0=p_0(\lambda,\Lambda)$ depends on the ellipticity constants only. In what follows we will denote by $I_M$ the interval $(p_0',p_0)$.
\par
Study of the second order regularity of solutions to linear divergent equations ($W^{2,p}$ estimate for solutions) with
discontinuous coefficients goes back to C.Miranda~\cite[Theorem 1.VIII]{Miranda}, who  showed
that if $d\geq 3$ and $\mathbf{A}$ belongs to the Sobolev class $W^{1,d}(\Omega)$, then any weak solution of \eqref{main1} with
$f\in L^2(\Omega)$ is a strong solution and
$\|D^2u \|_2 \leq C\|f\|_2,$
where $D^2=\sum\limits_{i,j=1}^d D_i D_j$.
Here and later, we denote the partial derivatives as $D_j u(x) = \frac{\partial u}{\partial x_j} (x), j=1,\ldots, d$.
\par
$W^{2,p}$-version of the Miranda's theorem is proved in~\cite{CMR2016}. Namely, the authors of~\cite{CMR2016} proved that for all
  $p\in I_M \bigcap (1,2)$ and $f\in L^p(\Omega)$ there exists a unique solution $u$ of \eqref{main1}
  that satisfies the following estimate $ \|D^2 u \|_p \leq C\|f\|_p$. If $d \geq 3$, it is possible to take $p\in I_M \bigcap (1,2]$.
\par
The aim of this paper is to prove that it is possible to obtain a bound for the solution
in $W^{2,p}$ of the problem \eqref{main1} assuming higher integrability of the right hand side $f$ and lower integrability of $\nabla \mathbf{A}$.
\par
Our proof uses commutator trick from \cite{PerSem1985}, Gagliardo-Nirenberg inequality, and Meyers regularity result. Note that the methods used in the proofs allow us to obtain explicit values of constants in the estimates.

\section{Main results}
In part \rm{\bf{II}} of Theorem~\ref{Theorem}, the entries $a_{ij}$ of the matrix $\mathbf{A}$ may be unbounded.
In this situation, the construction of the operator  $\mathcal{L}$ described in the introduction cannot be used, since it is based on the Lax-Milgram theorem. However, we can consider a sequence of matrices $\mathbf{A}_n, n=1,\ldots$ with coefficients
$$a_{ij}^{(n)}= \lambda \delta_{ij} + \left(a_{ij}-\lambda\delta_{ij}\right)\left(1+\frac{1}{n}\sum\limits_{j=1}^d a_{jj}\right)^{-1}.$$
It is straightforward to verify that $a_{ij}^{(n)}\in L^\infty(\Omega)$
\footnote{The boundedness of the diagonal elements follows from the definition. For off-diagonal elements, due to the symmetry and positive definiteness of the matrix $|a_{ij}| \leq (a_{ii} + a_{jj})/2.$}, that $\mathbf{A}_n$ satisfies condition \eqref{ELower} with the same constant $\lambda$, and that $\langle \mathbf{A}_n\xi,\xi \rangle \leq \langle \mathbf{A}_{n+1}\xi,\xi \rangle$. Let $\mathcal{L}_n$ denote the operator corresponding to the matrix $\mathbf{A}_n$.
Since the corresponding sesquilinear forms $\mathfrak{a}_n$ are monotonically increasing, then
the limit form $\mathfrak{a}=\lim\limits_{n\to\infty} \mathfrak{a}_n$ is densely defined,
closed, and for the limit operator  $\mathcal{L}$ associated with $\mathfrak{a}$  there is strong resolvent convergence  $\mathcal{L} = \lim\limits_{n\to\infty}\mathcal{L}_n$ (see \cite{Simon1}, \cite{Simon2}).
We use this operator in the case of unbounded coefficients.

\begin{thm}\label{Theorem}
Let $\Omega \subset {\mathbb R}^d, d\geq 2$ be an open bounded domain with $C^2$ boundary.
Let $\mathbf{A}:\Omega \rightarrow \mathbb{R}^{d\times d}$ be a measurable real-valued symmetric matrix  and $\mathbf{A} \in W^{1,q}(\Omega)$.
Let $u$ be the solution to equation \eqref{main1}.
\begin{itemize}
  \item[\rm{\bf{I.}}] Assume that the matrix $\mathbf{A}$ satisfies conditions \eqref{ELower}-\eqref{EUpper} and that $f\in L^p(\Omega) \bigcap L^s(\Omega)$, $p\in I_M$. Then $u\in W^{2,p}(\Omega)$ and
        \begin{equation}\label{Pp}
        \sum\limits_{i,j=1}^d \left\|D_i D_j u\right\|_{p} \leq C_1 \|f\|_p + C_2 \left(\sum\limits_{k,i,j=1}^d \|D_k a_{ij}\|_q\right)^2\|f\|_s,
        \end{equation}
    where $\frac{1}{p}=\frac{2}{q}+ \frac{1}{s} - \frac{2}{d}$ if $1 < s < d/2$ and $q=2p$ if $s>d/2$.
  \item[\rm{\bf{II.}}] Assume that matrix $\mathbf{A}$ satisfies condition \eqref{ELower} and that $f\in L^2(\Omega) \bigcap L^s(\Omega)$. Then $u\in W^{2,2}$ and
        \begin{equation}\label{P2}
        \sum\limits_{i,j=1}^d \left\| D_i D_j u\right\|_{2} \leq \frac{C_1}{\lambda}\|f\|_2 + \frac{C_2}{\lambda^2}   \left(\sum\limits_{k,i,j=1}^d \|D_k a_{ij}\|_q\right)^2\|f\|_s,
        \end{equation}
    where $\frac{1}{2}=\frac{2}{q}+ \frac{1}{s} - \frac{2}{d}$ if $1 < s < d/2$ and $q=4$ if $s>d/2$.
\end{itemize}
The constants $C_1$ and $C_2$ are independent of $f$ and $u$.
\end{thm}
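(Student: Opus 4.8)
The plan is to establish the estimates as \emph{a priori} bounds for solutions with smooth coefficients and then pass to the limit. First I would mollify $\mathbf A$ to $\mathbf A_\varepsilon\in C^\infty(\overline\Omega)$: mollification preserves the pointwise bounds \eqref{ELower}--\eqref{EUpper} with the same constants and does not increase any $L^q$ norm, so $\sum_{k,i,j}\|D_k a^{\varepsilon}_{ij}\|_q\le\sum_{k,i,j}\|D_k a_{ij}\|_q$, while the corresponding solution $u_\varepsilon$ lies in $W^{2,p}(\Omega)$ up to the $C^2$ boundary by the classical theory for equations with smooth coefficients. In part \textbf{II} I would first replace $\mathbf A$ by the bounded truncations $\mathbf A_n$ of the introduction, for which one checks $\sum\|D_k a^{(n)}_{ij}\|_q\le C\sum\|D_k a_{ij}\|_q$ uniformly in $n$. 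Every estimate below would be read as an \emph{a priori} bound for $u_\varepsilon$ with constants independent of $\varepsilon$ and of $n$; letting $\varepsilon\to0$ (and then $n\to\infty$) and using weak compactness in $W^{2,p}$ together with the convergence of $u_\varepsilon$ to $u$ in $W^{1,2}_0$ --- and, in part \textbf{II}, the strong resolvent convergence $\mathcal L_n\to\mathcal L$ --- would then give $u\in W^{2,p}$ and the asserted inequality. From here on the coefficients may be assumed smooth. Finally note that part \textbf{II} is just the case $p=2$ of the argument, with the $L^2$ bound \eqref{Meyers0} replacing Meyers' $L^p$ bound \eqref{Meyers1} (so that \eqref{EUpper} is never used), so I describe part \textbf{I}.

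The next step is the commutator trick of \cite{PerSem1985}: the elementary identity $[D_k,\mathcal L]u=-\Div\big((D_k\mathbf A)\nabla u\big)$ shows that $D_k u$ is a weak solution of
\[
\Div\big(\mathbf A\nabla (D_k u)\big)=\Div\big(f\,e_k-(D_k\mathbf A)\nabla u\big),\qquad k=1,\dots,d .
\]
Applying \eqref{Meyers1} to this divergence form equation (in part \textbf{II}, the bound \eqref{Meyers0}, which requires only coercivity), and then H\"older's inequality with $\tfrac1p=\tfrac1q+\tfrac1r$ (the hypotheses on $p,q,s,d$ force $q>p$, so that $r$ is admissible), I would sum over $i,k$ to obtain
\[
\sum_{i,j}\|D_iD_j u\|_p\le C\|f\|_p+C\Big(\sum_{k,i,j}\|D_k a_{ij}\|_q\Big)\|\nabla u\|_r .
\]
In the interior this is immediate; near $\partial\Omega$, where $D_k u$ is not an admissible test function, I would obtain it in the standard way by localizing, flattening the $C^2$ boundary, using tangential difference quotients, and recovering the pure normal second derivative from the non-divergence form of the equation.

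It then remains to bound $\|\nabla u\|_r$, and here I would use the Gagliardo--Nirenberg inequality \emph{with interpolation parameter exactly $\tfrac12$},
\[
\|\nabla u\|_r\le C\,\|u\|_{W^{2,p}}^{1/2}\,\|u\|_b^{1/2},
\]
the admissibility condition forcing $\tfrac1b=\tfrac1p-\tfrac2q$; thus $b=s^{**}$, $\tfrac1{s^{**}}=\tfrac1s-\tfrac2d$, when $1<s<d/2$, and $b=\infty$ when $s>d/2$ --- which is exactly the relation $\tfrac1p=\tfrac2q+\tfrac1s-\tfrac2d$ of the statement, and the prescription $q=2p$ (part \textbf{I}), resp.\ $q=4$ (part \textbf{II}), in the case $s>d/2$. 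I would bound $\|u\|_b$ by $C\|f\|_s$ (by $C\lambda^{-1}\|f\|_s$ in part \textbf{II}) using the De Giorgi--Stampacchia--Moser estimate for divergence form equations, which needs only the lower bound \eqref{ELower} --- so it is available even for unbounded $\mathbf A$ --- and the lower order norms in $\|u\|_{W^{2,p}}$ by $C\|f\|_p$ using \eqref{Meyers1} and Poincar\'e's inequality. Writing $S:=\sum_{i,j}\|D_iD_j u\|_p$, this yields $S\le C\|f\|_p+C\big(\sum\|D_k a_{ij}\|_q\big)S^{1/2}\|f\|_s^{1/2}$, and Young's inequality with a small parameter --- the step that produces the \emph{square} of $\sum\|D_k a_{ij}\|_q$ --- would give $S\le C_1\|f\|_p+\tfrac12 S+C_2\big(\sum\|D_k a_{ij}\|_q\big)^2\|f\|_s$; absorbing $\tfrac12 S$ proves \eqref{Pp}, and the same computation, carrying along the factors $\lambda^{-1}$ from \eqref{Meyers0} and from the De Giorgi estimate, proves \eqref{P2}.

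The main obstacle I expect is the rigorous realization of the \emph{a priori} step. A weak solution is not known in advance to possess second derivatives, and $D_k u$ is not an admissible test function up to $\partial\Omega$, so the commutator identity has to be implemented through difference quotients together with a genuine boundary argument --- this is where $C^2$ regularity of $\partial\Omega$ is used --- and in part \textbf{II} the whole scheme must be pushed through the double approximation $\mathbf A\to\mathbf A_n\to(\mathbf A_n)_\varepsilon$ with all constants controlled uniformly in both parameters. The remaining work --- checking that $p\in I_M$ suffices for the Meyers step, that the Gagliardo--Nirenberg exponents lie in the admissible range, and that $1\le b\le\infty$ --- is precisely the bookkeeping that pins down the stated relations among $p$, $q$, $s$ and $d$.
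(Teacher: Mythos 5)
Your proposal follows essentially the same route as the paper: reduce to smooth coefficients, use the commutator identity $[D_k,\mathcal L]u=\Div\bigl((D_k\mathbf A)\nabla u\bigr)$ together with Meyers' gradient estimate (resp.\ \eqref{Meyers0} when $p=2$) to bound second derivatives by $\bigl(\sum\|D_k a_{ij}\|_q\bigr)\|\nabla u\|_r$, then Gagliardo--Nirenberg plus the Stampacchia/Maz'ya/Trudinger bound $\|u\|_t\lesssim\|f\|_s$ and an absorption (Young/CBS) step. The only cosmetic differences are that the paper passes through $\|\Delta u\|_p$ with the Calder\'on--Zygmund inequality $\|D_iD_j\varphi\|_p\le A_p\|\Delta\varphi\|_p$ while you sum the Hessian entries directly, and the paper makes the a~priori step rigorous by mollifying $\mathbf A$ and invoking classical smooth regularity rather than by difference quotients (also, your commutator sign should be $+$, not $-$, though this cancels in the derived equation).
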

\begin{proof}
Approximation arguments allow to reduce the proof to the case of smooth functions and smooth boundary (see below).
Thus, let assume that $a_{ij}\in C^\infty(\Omega)$ and $f\in C^\infty(\Omega)$.
By elliptic regularity theory, the solution $u$ in this case belongs to $C^\infty(\Omega)$ (see \cite[Theorem 3 in \S 6.3.1]{Evans}).
\par
A standard theorem in harmonic analysis states that $\left\|D_i D_j \varphi\right\|_{L^p(\mathbb{R}^d)} \leq A_p \|\Delta \varphi\|_{L^p(\mathbb{R}^d)}, 1 < p < \infty$, $i,j = 1,\ldots, d, \varphi\in C^2_0(\mathbb{R}^d)$ (see, e.g.,~\cite[\S 1.3 in Chapter III]{Stein}).
Therefore, we can estimate $\| \Delta u\|_p$ only.
\par
For smooth coefficients, we have the equality $[D_k, \mathcal{L}]v=\sum\limits_{i,j=1}^d D_i (D_k a_{ij})D_j v$, where $[\cdot,\cdot]$ denotes the
usual commutator bracket. To proceed with the proof, we also need the following commutator equality for linear operators: $ [A,B^{-1}] = B^{-1} [B,A] B^{-1}$.
\par
Now we have
\begin{equation}
\begin{aligned}
&\left\|\Delta\mathcal{L}^{-1}f\right\|_p \leq \sum\limits_{k=1}^d \left\|D_k D_k \mathcal{L}^{-1} f \right\|_p \leq \\
&\sum\limits_{k=1}^d \left(\left\|D_k \mathcal{L}^{-1} D_k f \right\|_p + \left\|D_k \mathcal{L}^{-1} [D_k, \mathcal{L}] \mathcal{L}^{-1} f\right\|_p\right) \leq \\
&\sum\limits_{k=1}^d \left\|D_k \mathcal{L}^{-1} D_k f \right\|_p  + \sum\limits_{i,j,k=1}^d\left\|D_k \mathcal{L}^{-1} D_i (D_k a_{ij}) D_j \mathcal{L}^{-1} u\right\|_p.
\end{aligned}
\end{equation}
Meyers' gradient estimate \eqref{Meyers1} implies that
\begin{equation}\label{Meyers2}
\left\|D_k \mathcal{L}^{-1} D_i  f\right\|_p \leq C_M\|f\|_p, \quad i,k=1,\ldots,d; \quad C_M=C_M(p,\lambda,\Lambda) < \infty.
\end{equation}
By \eqref{Meyers2} and H\"older's inequality we can estimate as follows:
$$\left\|\Delta\mathcal{L}^{-1}f\right\|_p \leq d C_M \|f\|_p + C_M \sum\limits_{i,j,k=1}^d \|D_k a_{ij}\|_q \cdot \|D_j \mathcal{L}^{-1} f  \|_r, \frac{1}{p}=\frac{1}{q}+\frac{1}{r}.$$
We now use the Gagliardo-Nirenberg inequality (see, e.g.,~\cite[Theorem 12.87]{Leoni})
\footnote{For bounded Lipschitz domains Gagliardo-Nirenberg inequality has the form $\|D_j \varphi\|_r \leq C_{GN} \|\Delta \varphi\|_p^{1/2} \cdot \|\varphi\|_t^{1/2} + c \|\varphi\|_t$ (see, e.g.,~\cite{LZ2022}), but for $\varphi\in W^{2,p}_0$ the constant $c$ can be equal to zero.
Although the presence of this term is not an obstacle to proof, it may be essential when considering other boundary conditions.}
\begin{equation*}\label{GN1}
\|D_j \varphi\|_r \leq C_{GN} \|\Delta \varphi\|_p^{1/2} \cdot \|\varphi\|_t^{1/2},
\end{equation*}
where $\frac{2}{r}=\frac{1}{p}+\frac{1}{t}$,  $C_{GN} = C_{GN}(d,p,t) < \infty$.
\par
The Cauchy-Bunyakovsky-Schwarz inequality implies that
\begin{equation}\label{GN2}
\|D_j \varphi\|_r \leq C_{GN}\left(\varepsilon\|\Delta \varphi\|_p + \frac{1}{4\varepsilon}\|\varphi\|_t\right), \quad  \varepsilon>0.
\end{equation}
From \eqref{GN2}, we now obtain
$$\left\|\Delta\mathcal{L}^{-1}f\right\|_p \leq d C_M \|f\|_p + C_M C_{GN} \sum\limits_{k,i,j=1}^d \|D_k a_{ij}\|_q \cdot \left(\varepsilon\|\Delta \mathcal{L}^{-1}f\|_p + \frac{1}{4\varepsilon}\|\mathcal{L}^{-1}f\|_t \right).$$
For sufficiently small $\varepsilon>0$

\begin{equation*}
\left\|\Delta\mathcal{L}^{-1}f\right\|_p \leq
\frac{d C_M \|f\|_p + \frac{C_M C_{GN}}{4\varepsilon} \|\mathcal{L}^{-1}f\|_t \sum\limits_{k,i,j=1}^d \|D_k a_{ij}\|_q}{1-\varepsilon C_M C_{GN}\sum\limits_{k,i,j=1}^d \|D_k a_{ij}\|_q}.
\end{equation*}

We choose $\varepsilon$ such that the denominator in the previous inequality equals $1/2$, i.e.
$$\varepsilon = \left(2C_M C_{GN}\sum\limits_{k,i,j=1}^d \|D_k a_{ij}\|_q\right)^{-1}. $$
Then
\begin{equation}\label{Estimate0}
\left\|\Delta\mathcal{L}^{-1}f\right\|_p \leq 2dC_M \|f\|_p + \frac{1}{2}C_M^2 C_{GN}^2 \left(\sum\limits_{k,i,j=1}^d \|D_k a_{ij}\|_q\right)^2\|\mathcal{L}^{-1}f\|_t.
\end{equation}
Let us now use the following result (see~\cite{Stamp},~\cite{Maz1961},~\cite{Trudin})
\begin{align*}
\left\|\mathcal{L}^{-1}f \right\|_t &\leq C_S\|f \|_{s}, &&\text{if } s < \frac{d}{2} \text{ and } \frac{1}{t}=\frac{1}{s}-\frac{2}{d},\\
\left\|\mathcal{L}^{-1}f \right\|_\infty &\leq C_S\|f\|_s, &&\text{if } s > \frac{d}{2}.
\end{align*}
If we combine this with \eqref{Estimate0}, we complete the proof of part \rm{\bf{I}} in the ''smooth'' case.

\medskip
\par
The reduction of the problem to the case of bounded coefficients was briefly described above.
The transition from bounded functions to smooth ones is well-known. We will not describe this process here and refer the reader to~\cite[Proof of Theorem 8.12]{GT}, \cite{AQ2002}, \cite{BE2024} for details. We only make just a few comments.
The possibility of using domains with infinitely smooth boundaries is based on the choice of an increasing sequence of $C^\infty$ subdomains $\Omega_n$ converging to $\Omega$.
A standard mollification may be used to smooth the coefficients.
Choose functions $\mu_n(x)=\mu(nx)\left(\int \mu(nx) {\rm d}x \right)^{-1}$, where $\mu\in C_0^\infty(\mathbb{R}^d)$, $0 \leq \mu \leq 1$, $\mu(x)=0$ for $|x|\geq 2$, and $\mu(x)=1$ for $|x|\leq 1$.
Define the smoothed coefficients by $a_{ij}^{(m)} = \mu_m * a_{ij}$. Matrices $\mathbf{A}^{(m)}$ have the same ellipticity constants $\lambda, \Lambda$ as the original matrix $\mathbf{A}$; operator $\mathcal{L}$ is limit in the sense of strong resolvent convergence of operators $\mathcal{L}^{(m)}$ with smooth coefficients.
\medskip
\par
To prove part \rm{\bf{II}}, it remains to note that when $p = 2$, we do not have to use \eqref{Meyers2}; we could have applied \eqref{Meyers0}.
Therefore, the results are correct even without the assumption of the boundedness of the coefficients $a_{ij}$  and constant $C_M$ equals $\lambda^{-1}$.
\end{proof}

{\bf Remarks.}
\begin{itemize}
  \item[(1)] {For } $q=d, s=2$ part \rm{\bf{II}} of Theorem \ref{Theorem} corresponds to Miranda's result, but without the assumption of boundedness of the coefficients (from $\mathbf{A}\in W^{1,d}(\Omega)$ boundedness does not follow).
  \item[(2)] For $p\in I_M$ and $s=p < 2$, part \rm{\bf{{I}}} of Theorem \ref{Theorem}  agrees with the results in \cite{CMR2016}.
  \item[(3)] For $p=2, s = \infty$, and $q=4$ we obtained result proved in \cite{PerSem1985} for the operator $\mathcal{L}+1$ in $L^2(\mathbb{R}^d)$ (see also \cite{GPN2019}).
\end{itemize}

\section{Final remarks}
Whether Theorem \ref{Theorem} is sharp for all values of the parameters $p,q,s$ remains an open question (sharpness means that weakening the assumption $\mathbf{A}\in W^{1,q}$ implies a loss of regularity). In the classical case of Miranda's theorem, where $f\in L^2(\Omega), \mathbf{A}\in W^{1,d}(\Omega)$, the result is sharp.
If $d=2$ and $\mathbf{A}\in W^{1,2}(\Omega)$, in papers \cite{Clop2009}, \cite{CMR2016}, using the theory of quasiconformal mappings, was constructed an example that demonstrates the failure of $W^{2,2}$-regularity in general.
\par
Let us show that in the multidimensional case, a similar negative result occurs as well.
Let $d > 2$ and assume that $u\in W^{2,2}_{loc}(\Omega)$. Then, by the Sobolev embedding theorem, $\nabla u\in L^{\frac{2d}{d-1}}_{loc}$.
However, N.G.Meyers~\cite{M} constructed an example of the equation $L_M u = \Div\mathbf{A}\nabla u=0$ with a solution such that $\nabla u \notin L^{p_0}_{loc}$, $p_0 = \frac{2K}{K-1}$, $K=\sqrt{\frac{\Lambda}{\lambda}}$.
In his example $\nabla\mathbf{A}\in L^{d,\infty}$ (Marcinkiewicz space) and the inclusion $u\in W^{2,2}_{loc}(\Omega)$  may fail if $\Lambda/\lambda$ is sufficiently large.
It follows that assumption $\mathbf{A}\in W^{1,d}$ is sharp.
\par
We can use this example and method of descent to show the sharpness of our estimates in some other cases.
Let $d=4, s=\infty, p=2$. Estimate \eqref{P2} reads as follows
$$\|u\|_{2,2} \leq c_1\|f\|_2 + c_2 \|\nabla \mathbf{A}\|_4^2\|f\|_\infty$$
and the above arguments show that this estimate may fail if $\nabla\mathbf{A}\in L^{4,\infty}(\mathbb{R}^4)$.
Now let $d > 4$, $\mathbf{x}\in \mathbb{R}^4$, $(\mathbf{x},\mathbf{y})$ be a point in $\mathbb{R}^d$ where
$\mathbf{y}$ stands for the remaining $(d - 4)$ coordinates.
We extend the given solution $u$ and coefficients by defining $u(\mathbf{x},\mathbf{y})=u(\mathbf{x})$, etc. Then we have $L_M u+\Delta_{\mathbf{z}} u=0$.\footnote{Here, we have repeated verbatim Meyers' arguments, who constructed an example in $\mathbb{R}^2$ and used the descent method to construct an example in $\mathbb{R}^d, d > 2$.} Obviously, this example demonstrates the sharpness of the condition $\mathbf{A}\in W^{1,4}(\mathbb{R}^d)$.

\par
The limiting case $s = d/2$ is not considered in Theorem \ref{Theorem}, but our method of proof allows one to do this and estimate $\|\Delta u\|_{p}$ via the Orlicz-space norm of $f$.
\par
The assumption of $C^2$-smoothness of the boundary cannot be significantly weakened. Already in the case of $L = \Delta$, $f\in C_0^\infty(\Omega)$ there exists $C^1$ domains such that $\Delta u \notin L^p(\Omega)$ for every $p$, $1 \leq p < \infty$~\cite[Theorem 1.2b]{JK1995}.
\par
For any open subset  $\Omega' \Subset \Omega$ with ${\rm dist}(\partial\Omega',\partial\Omega)>0$, \emph{local} estimates such as  \eqref{Pp} and  \eqref{P2} can be proved for $C^1$-domains.
\par
Having estimates of $\|\Delta u\|_p$, using the Gagliardo-Nirenberg inequality, we can obtain a higher integrability of $\nabla u$, for example, $\mathbf{A}\in W^{1,4}, f\in L^\infty \Rightarrow u\in W^{2,2}\bigcap L^\infty \subset W^{1,4}$.
\par
For simplicity, we present the results here for equations without lower-order terms, though the methods of this paper can be adapted to more general cases.
\par
Equations of the form $\Div(\mathbf{A}\nabla u) = \Div \mathbf{F}$ with $\mathbf{A}\in VMO$ (the space of functions of vanishing mean oscillation) were considered by
G.Di~Fazio~\cite{DiFazio} (see also \cite{AQ2002}, \cite{IS1998}).  It is proved the existence of the unique weak solution that satisfies $\|\nabla u \|_p \leq C\|\mathbf{F}\|_p$,  $1<p<\infty$.
Therefore, if  $\mathbf{A}\in VMO\bigcap W^{1,q}$, it is possible to assume $p\in (1,\infty)$ in part \rm{\bf{I}} of Theorem \ref{Theorem}.



\begin{thebibliography}{99}

\bibitem{AQ2002} Auscher,~P., Qafsaoui,~M.:
\newblock Observations on {$W^{1,p}$} estimates for divergence elliptic equations with VMO coefficients.
\newblock Boll. Unione Mat. Ital., Sez. B, Artic. Ric. Mat., \textbf{(8) 5}, No. 2, 487-509 (2002)

\bibitem{BE2024} B\"ohnlein,~T., Egert,~M.:
\newblock Explicit improvements for $L^p$-estimates related to elliptic systems.
\newblock Bull. Lond. Math. Soc., \textbf{56}, No. 3, 914-930 (2024)

\bibitem{Clop2009} Clop,~A., Faraco,~D., Mateu,~J., Orobitg,~J.,~Zhong,~X.:
\newblock Beltrami equations with coefficient in the Sobolev space $W^{1,p}$.
\newblock Publ. Mat. \textbf{53}, No. 1, 197-230 (2009)

\bibitem{CMR2016} Cruz-Uribe,~D., Moen,~K., Rodney,~S.:
\newblock Regularity results for weak solutions of elliptic PDEs below the natural exponent.
\newblock Annali di Matematica, \textbf{195}, No. 3, 725-740 (2016)

\bibitem{DiFazio} Di~Fazio,~G.:
\newblock {$L^p$} estimates for divergence form elliptic equations with discontinuous coefficients.
\newblock Boll. Un. Mat. Ital. Serie VII. A, \textbf{10}, No. 2, 409-420 (1996)

\bibitem{Evans} Evans,~L.~C.:
\newblock Partial differential equations, 2nd ed..
\newblock Graduate Studies in Mathematics, \textbf{19},  Providence, RI, American Mathematical Society, (2010)

\bibitem{GT} Gilbarg,~D., Trudinger,~N.~S.:
\newblock Elliptic Partial Differential Equations of Second Order.
\newblock Classics in Mathematics. Springer, Berlin (2001)

\bibitem{GPN2019} Giova,~R., Passarelli di Napoli,~A.:
\newblock Regularity results for a priori bounded minimizers of non-autonomous functionals with discontinuous coefficients.
\newblock Adv. Calc. Var. \textbf{12}, No. 1, 85-110 (2019)

\bibitem{IS1998} Iwaniec,~T., Sbordone,~C.:
\newblock Riesz transforms and elliptic PDEs with VMO coefficients.
\newblock J. Anal. Math., \textbf{74}, No. 1, 183-212 (1998)

\bibitem{IS2001} Iwaniec,~T., Sbordone,~C.:
\newblock Quasiharmonic fields.
\newblock Ann. I.H.Poincar\'e, Non Linear Analysis, \textbf{18}, No. 5, 519-572 (2001)

\bibitem{JK1995} Jerison,~D., Kenig,~C.~E.:
\newblock The inhomogeneous Dirichlet problem in Lipschitz domains.
\newblock J. Funct. Anal., \textbf{130}, No. 1, 161-219 (1995)

\bibitem{LN} Leonetti,~F., Nesi,~V.:
\newblock Quasiconformal solutions to certain first order systems and the proof of a conjecture of G. W. Milton.
\newblock J. Math. Pures Appl., \textbf{76}, No. 2, 109-124  (1997)

\bibitem{Leoni} Leoni,~G.:
\newblock A First Course in Sobolev Spaces.
\newblock Grad. Stud. Math., vol. 105, Amer. Math. Soc., Providence, RI, (2009)

\bibitem{LZ2022} Li,~C., Zhang,~K.:
\newblock A note on the Gagliardo-Nirenberg inequality in a bounded domain.
\newblock Commun. Pure Appl. Anal. \textbf{21}, No. 12, 4013-4017 (2022)

\bibitem{Maz1961} Maz'ya,~V.~G.:
\newblock Some estimates of solutions of second-order elliptic equations.
\newblock Dokl. Akad. Nauk SSSR, \textbf{137}, No. 5, 1057-1059 (1961)

\bibitem{M} Meyers,~N.~G.:
\newblock An {$L^{p}$}-estimate for the gradient of solutions of second order elliptic divergence equations.
\newblock Ann. Scuola Norm. Sup. Pisa, \textbf{17}, No. 3, 189-206 (1963).

\bibitem{Miranda} Miranda,~C.:
\newblock Sulle equazioni ellittiche del secondo ordine di tipo non variazionale, a coefficienti discontinui.
\newblock Ann. Mat. Pura et Appl., \textbf{63}, No. 1, 353-386 (1963)

\bibitem{PerSem1985} Perelmuter,~M.~A., Semenov,~Yu.~A.:
\newblock Essential self-adjointness of second-order elliptic operator with measurable coefficients.
\newblock Ukr. Math. J., \textbf{37}, No. 2, 163-168 (1985)

\bibitem{Simon1} Simon,~B.:
\newblock A canonical decomposition for quadratic forms with applications to monotone convergence theorems.
\newblock J. Func. Anal., \textbf{28}, No. 3, 377-385 (1978)

\bibitem{Simon2} Simon,~B.:
\newblock Lower semicontinuhy of positive quadratic forms.
\newblock Proc.Royal Soc. Edinburgh: Section A Mathematics. \textbf{79}, No. 3-4, 267-273 (1978).

\bibitem{Stamp} Stampacchia,~G.:
\newblock Le probl\`{e}me de Dirichlet pour les \'equations elliptiques du second ordre \`{a} coefficients discontinus.
\newblock  Ann. Inst. Fourier (Grenoble), \textbf{15}, No. 1,  189-258 (1965)

\bibitem{Stein} Stein,~E.~M.:
\newblock Singular Integrals and Differentiability Properties of Functions.
\newblock Princeton Univ. Press, Princeton, N.J., (1970)

\bibitem{Trudin} Trudinger,~N.~S.:
\newblock Linear elliptic operators with measurable coefficients.
\newblock Ann. Scuola Norm. Sup. Pisa \textbf{27}, No. 2, 265-308 (1973)

\bibitem{Yosida} Yosida, K.:
\newblock Functional Analysis.
\newblock Classics in Mathematics. Springer, Berlin, (1995)

\end{thebibliography}
\end{document}